\numberwithin{equation}{section}
\newcommand{\const}{\rm const}
  \newcommand{\supp}{\rm supp}
  \newcommand{\Dom}{\rm  Dom}
\theoremstyle{plain}
\newtheorem{theorem}{Theorem}[section]
\newtheorem{definition}{Definition}[section]
\newtheorem{remark}{Remark}[section]
\newtheorem{example}{Example}[section]
\renewenvironment{proof}{{\bf{Proof.}}}{\hfill $\Box$ \\}
\title{\large \textbf{Norm estimates in Grand Lebesgue Spaces for some operators, including magic square matrices}}
\footnotesize\date{}
\author{\normalsize Maria Rosaria Formica ${}^{1}$,   \normalsize Eugeny Ostrovsky
${}^2$ and \normalsize Leonid Sirota ${}^3$}
\begin{document}

  \maketitle

\begin{center}
{\footnotesize ${}^{1}$ Universit\`{a} degli Studi di Napoli \lq\lq Parthenope\rq\rq, via Generale Parisi 13,\\
Palazzo Pacanowsky, 80132,
Napoli, Italy.} \\

\vspace{1mm}

{\footnotesize e-mail: mara.formica@uniparthenope.it} \\

\vspace{2mm}

{\footnotesize ${}^{2,\, 3}$  Bar-Ilan University, Department of Mathematics and Statistics, \\
52900, Ramat Gan, Israel.} \\

\vspace{1mm}

{\footnotesize e-mail: eugostrovsky@list.ru}\\

\vspace{1mm}

{\footnotesize e-mail: sirota3@bezeqint.net} \\

\end{center}

\vspace{3mm}

\begin{abstract}
 \hspace{3mm} We extend the classical Lebesgue-Riesz norm estimations for integral operators acting between different classical Lebesgue-Riesz spaces into the Grand Lebesgue Spaces, in the general case. As an example we consider matrix operators acting between finite dimensional Lebesgue-Riesz spaces, especially generated by means of positive magic squares.

\end{abstract}

\vspace{3mm}

 {\it \footnotesize Keywords:} {\footnotesize Lebesgue-Riesz spaces, Grand Lebesgue spaces, measures, integral operators, moment rearrangement invariant space, matrix, magic squares, Young-Fenchel (Legendre) transform, fundamental functions, tail and generating functions.}

\vspace{3mm}

\noindent {\it  \footnotesize 2020 Mathematics Subject Classification}:
 {\footnotesize 46E30; 60B05}
 \vspace{2mm}


 \vspace{5mm}

%

 \section{Introduction}

 \vspace{4mm}

\hspace{3mm}  Let $ \ (X = \{x\}, \cal{M}, \mu)  \ $ and  $ \ (Y = \{y\}, \cal{N}, \nu) \ $ be two measurable spaces with non - trivial measures $ \ \mu, \ \nu$, respectively. Let  $ \ p,q \in [1,\infty] \ $ and let $ \  f: X \to \mathbb R, \ g: Y \to \mathbb R \ $ be measurable numerical valued functions. The correspondent Lebesgue - Riesz spaces  and norms will be denoted, as usually, by $ \ L(p,X) = L_p, \ L(q,Y) = L_q, \ $

$$
||f||_{L(p,X)} = ||f||_p := \left[ \  \int_X |f(x)|^p \ \mu(dx)  \ \right]^{1/p},
$$

$$
|||g|||_{L(q,Y)} = |||g|||_q := \left[ \ \int_Y |g(y)|^q \ \nu(dy)  \ \right]^{1/q}.
$$

 \hspace{3mm} Let also $ \ U[\cdot] \ $ be a certain operator $ \ U: \ L(q,Y) \to L(p,X), \ $ not necessarily linear, such that there exists a constant $ \ \sigma \in (0, \infty) \ $ and two  non-empty intervals $ \ (a,b), \ 1 \le a < b \le \infty, \ (c,d), \ 1 \le c < d \le \infty \ $,
 for which

\vspace{3mm}

\begin{equation} \label{key condit}
\forall g \in L(q,Y) \ \Rightarrow \ ||U[g]||_p \le \sigma^{1/q - 1/p} \ |||g|||_q, \ p \in (a,b), \ q \in (c,d).
\end{equation}

\vspace{4mm}

 \hspace{3mm} On the other words, the operators norm of $ \ U \ $ acting from the space  $ \ L(q,Y) \ $ into the $  \ L(p,X), \ $  i.e. the value

 $$
 ||U||_{(L_q \to L_p)} \stackrel{def}{=} \sup_{g \ne 0} \left[ \  \frac{||U[g]||_p}{|||g|||_q}  \ \right]
 $$
allows the estimate

\begin{equation} \label{exact value}
 ||U||_{(L_q \to L_p)} \le  \sigma^{1/q - 1/p}, \  p \in (a,b), \ q \in (c,d)
\end{equation}
for some finite positive value $ \ \sigma, \ $ independent on $ \ (p,q). \ $ \par

 \ Note that the case when

\begin{equation} \label{exact up to const}
 ||U||_{(L_q \to L_p)} \le C \cdot  \sigma^{1/q - 1/p}, \  p \in (a,b), \ q \in (c,d), \ C = {\const} \in (0,\infty)
\end{equation}
may be investigated quite analogously. \par
 \ One can understood as a value $ \ C, \ $ of course, in the last relation its {\it minimal value:}

\vspace{3mm}

\begin{equation} \label{minvalue}
\underline{C} = \underline{C}(\sigma) \stackrel{def}{=} \sup_{g: \ |||g|||_q \in (0, \ \infty)} \ \sup_{p,q} \ \left\{ \sigma^{1/p - 1/q } \frac{||U[g]||_p}{|||g|||_q } \ \right\}.
\end{equation}

\vspace{4mm}

\hspace{3mm} {\bf  Our target in this short report is the extrapolation of the  inequality  (\ref{exact value}) into the so - called Grand Lebesgue
Spaces (GLS), instead of the classical Lebesgue - Riesz ones.  } \par

\vspace{4mm}

 \hspace{3mm}  {\it Some examples.} Put $ \ X = Y = \mathbb R^n, \ n = 1,2,\ldots. \ $ Let $ \ A: \mathbb R^n \to \mathbb R^n \ $ be a linear operator generated by  a square matrix
 with  the  correspondent  real valued entries $ \ a(i,j), \   i,j = 1,2,\ldots,n: \ $

$$
U[g](i) = \sum_{j=1}^n a(i,j)  g(j).
$$
 \ If in particular

$$
\sum_{j=1}^n |a(i,j)| \le 1, \ \ \forall i=1,\ldots,n
$$
 then  (\ref{exact value})  holds true with $ \ \sigma = 1. \ $ \par

 \vspace{3mm}

 \ Assume  now that the matrix $ \ A \ $ is a so - called  {\it magic square} matrix having only non - negative  entries:

\begin{equation}\label{def magic square}
\sum_j a(i,j) = \sum_i a(i,j) = \sum_k a(k,k) = \sum_l a(n-l +1,l) = \alpha = \const \in (0,\infty).
\end{equation}
 \  Both the measures $ \ \mu, \nu \ $ are uniform: \ $ \ \mu(j) = \nu(k)  = 1, \ j,k = 1,2,\ldots, n.  \ $  In the recent
article \cite{magic matr} it is proved that (\ref{exact up to const}) holds true with $ \ C = \alpha, \ \sigma = n. \ $ \par
 \ Moreover, in this case  one has

\begin{equation} \label{super exact}
 ||U||_{(L_q \to L_p)} = \alpha \cdot  n^{1/q - 1/p}, \  1 \le q \le p \le \infty,
\end{equation}
where $\alpha$ is the value given in \eqref{def magic square}.
\vspace{3mm}

 \ So, the value $ \ \alpha \ $ in (\ref{super exact}) is the best possible for all the values $ \ (p,q). \ $ \par

\vspace{3mm}

 \ Other examples may be found in   \cite{Ditzian,Nik1,Nik2}, devoting to the theory of approximation.\par

 \vspace{3mm}

 {\it We generalize  in this report the method offered in} \cite{Nik1}. \par

\vspace{4mm}

 \section{Grand Lebesgue Spaces.}

\vspace{4mm}

 \hspace{3mm}  We recall here, for the reader convenience, some known definitions and  facts  from the theory of Grand Lebesgue Spaces (GLS). \par

 \vspace{3mm}

    \ Let $ \ a = {\const} \ge 1, \  b = {\const} \in (a,\infty] $ and let $ \ \psi = \psi(p), \ p \in (a,b), \ $ be a positive measurable numerical valued function (real or complex), not necessarily finite in the boundary  points $ \  p = a + 0, \   p = b-0 \, $, such that $ \ \inf_{p \in [a, b)} \psi(p) > 0. \ $
The set of all such  functions  will be denoted as $ \ G \Psi(a,b)$; define also 
$$
G\Psi := \bigcup_{a, b} G \Psi(a,b).
$$


 For instance, functions belonging to $ G \Psi(a,b)$ are
$$
\psi_m(p) := p^{1/m}, \ \ m = {\const} > 0,  \ \ p \in [1,\infty)
$$
or, for $ 1  < b < \infty, \ \alpha,\beta = \const \ge 0$,
$$
   \psi_{b; \alpha,\beta}(p) := (p-1)^{-\alpha} \ (b-p)^{-\beta}, \  \ p \in
   (1,b).
$$

 \ The case $ \ m = 2 \ $ corresponds to the classical {\it subgaussian space}.\par

 The (Banach) Grand Lebesgue Space (GLS) \ $G \psi  = G\psi(a,b)$ on the space $X$ consists
of all the real (or complex) numerical valued measurable functions
$f: X \to \mathbb R$ having finite norm
\begin{equation} \label{norm psi}
    ||f||_{G\psi} \stackrel{def}{=} \sup_{p \in [a,b)} \left[ \frac{||f||_p}{\psi(p)} \right].
 \end{equation}

 \vspace{3mm}

The function $ \  \psi = \psi(p) \  $ is named {\it generating function } for the space $G \psi$.

If for instance
$$
  \psi(p) = \psi_{r}(p) = 1, \ \  p = r;  \ \  \ \psi_{r}(p) = +\infty,   \ \ p \ne r,
$$
 where $ \ r = {\const} \in [1,\infty),  \ C/\infty := 0, \ C \in \mathbb R, \ $ (an extremal case), then the correspondent
 $ \  G\psi_r  \  $ space coincides  with the classical Lebesgue - Riesz space $ \ L_r = L_r(X). \ $

\begin{remark}\label{classical grand Lebesgue}
 {\rm
 Let $1<q<\infty$ and $b=q$. Define $\psi(p)=(q-p)^{-1/p}$, \ $p\in (1,q)$, and let $X\subset \mathbb R^n$, \ $n\geq 1$, a measurable set with finite Lebesgue measure; then replacing in \eqref{norm psi} $p$ with $q-\varepsilon$, \ $\varepsilon\in(0,q-1)$, the space $G\psi$ reduces to 
 the classical Grand Lebesgue space $L^{q)}(X)$ defined by the norm
%
%
 \begin{equation*}
 ||f||_{L^{q)}(X)}=||f||_{q)}=\sup_{0<\varepsilon<q-1} \varepsilon^{\frac{1}{q-\varepsilon}}||f||_{q-\varepsilon}.
 \end{equation*}
 }
 \end{remark}

\vspace{4mm}

These spaces and their particular cases $L^{q)}(X)$ (Remark \ref{classical grand Lebesgue}) are investigated in many works (e.g.
\cite{Fiorenza2000,Kozachenko-Ostrovsky 1985,liflyandostrovskysirotaturkish2010,Ostrovsky 1994,Ostrov Prokhorov,Samko-Umarkhadzhiev,Samko-Umarkhadzhiev-addendum,anatriellofiojmaa2015,anatrielloformicaricmat2016}). For example
they play an important role in the theory of Partial Differential Equations (PDEs) (see, e.g., \cite{Ahmed-Fiorenza-Formica-Gogatishvili-Rakotoson,Fiorenza-Formica-Gogatishvili-DEA2018,fioformicarakodie2017,Iwaniec-Sbordone 1992,Greco-Iwaniec-Sbordone-1997}), in interpolation
theory (see, e.g.,
\cite{AFF2022_JFAA,fioforgogakoparakoNA,fiokarazanalanwen2004,formicagiovamjom2015}), in Functional Analysis (see, e.g., \cite{Fiorenza-Formica,FOS2021_Math_Nachr,FOS2021_JPDOA}), in the theory of
Probability (\cite{Ermakov etc. 1986,Kozachenko-Ostrovsky 1985,Ostrovsky1999,Ostrovsky HIAT,Ostrov Prokhorov,FOS2022_Contemporary Mathematics,ForKozOstr_Lithuanian}), in Statistics
(\cite{Kozachenko-Ostrovsky 1985,Ostrov Prokhorov,Ostrov frac der}, \cite[chapter 5]{Ostrovsky1999}), in theory of random fields
(\cite{Kozachenko-Ostrovsky 1985,Ostrovsky1999,CLT Holder sp}).

\vspace{3mm}

These spaces are rearrangement invariant (r.i.) Banach function
spaces; the fundamental function of the space $ \ G\psi \ $ has been studied in
\cite{Ostrovsky Fund fun} and has the form
\begin{equation} \label{Fund fun}
 \phi_{G\psi}(\delta)  \stackrel{def}{=} \sup_{p \in \Dom (\psi)} \left\{ \ \frac{\delta^{1/p}}{\psi(p)}  \ \right\}, \ \delta > 0.
\end{equation}
They not coincide, in the general case, with the
classical Banach rearrangement functional spaces: Orlicz, Lorentz,
Marcinkiewicz, etc., (see \cite{liflyandostrovskysirotaturkish2010,Ostrovsky HIAT}).

\hspace{3mm}  In the general case these spaces are non - separable. \par

The belonging of a measurable function $ f: X \to \mathbb{R}$ to some $ G\psi$
space is closely related to its tail function behavior
$$
 T_f(t) \stackrel{def}{=} {\bf P}(|f| \ge t)={\bf \mu}\{x \in X \ : \ |f(x)| \ge t\}, \ \ t \ge 0,
 $$
as $ \ t \to 0+ \ $ as well as when $ \ t \to \infty $ (see
\cite{Kozachenko-Ostrovsky 1985,KozOsSir2019,liflyandostrovskysirotaturkish2010,Ostrovsky 1994,Ostrovsky HIAT,Ostrov frac der,Ostrovsky1999,ForKozOstr_Lithuanian}) and so on.


\hspace{3mm} In detail, let $  \upsilon  $ be a non-zero measurable function belonging to some Grand Lebesgue Space $ \ G\psi \ $ and suppose  $ \ ||\upsilon||_{G\psi} = 1 $. Define the Young-Fenchel (or Legendre) transform of the function $\ h(p)=h[\psi](p)=p \ln \psi(p) \ $, 
 (see, e.g., \cite{Buld Koz AMS} and references therein)
$$
h^*(v)= h^*[\psi](v) := \sup_{p \in \Dom[\psi]}(p v - p \ln \psi(p))
$$
where $ \ \Dom[\psi] \ $ denotes the domain of definition (and finiteness) for the function $ \ \psi(\cdot). \ $

\vspace{1mm}
\ We get
\begin{equation} \label{Young Fen}
T_{\upsilon}(t) \le \exp(-h^*(\ln t)), \ \ t \ge e.
\end{equation}
 \ Note that the inverse conclusion, under suitable natural conditions, is true (see \cite{Ermakov etc. 1986,Kozachenko-Ostrovsky 1985,KozOsSir2019,liflyandostrovskysirotaturkish2010,Ostrovsky1999}). \par
 Notice also that these spaces coincide, up to equivalence of the norms and under appropriate conditions, with the so-called {\it exponential Orlicz} spaces, see, e.g.,
\cite{Ermakov etc. 1986,Kozachenko-Ostrovsky 1985,Ostrovsky1999,Ostrovsky HIAT}.

 \vspace{4mm}

  \ Let now  $ \ \zeta  \ $ be a measurable  function  $ \ \zeta: X \to \mathbb R  \ $  such that

$$
\exists a, b = {\const}, \ 1 \le a < b \le \infty \ : \ ||\zeta||_p < \infty, \ p \in (a,b).
$$
 \ The so-called {\it natural function} $ \ \psi[\zeta] = \psi[\zeta](p) \ $  for this variable is defined by

$$
\psi[\zeta](p) \stackrel{def}{=} ||\zeta||_p,
$$
with the corresponding domain of definition $ \ \Dom \ \psi[\zeta](\cdot). \ $ \par

Analogously, let $ \ \cal{V} \ $ be an arbitrary set and let $ \  \{\zeta_v \}, \ v \in \cal{V}, \ $  be a {\it family } of measurable functions such that
\begin{equation} \label{family}
\exists \, a,b = {\const} \in (1,\infty], \ a < b, \ : \ \sup_{v \in \cal{V}} ||\zeta_v||_p < \infty, \ \ \forall p \in [a,b)  \ .
\end{equation}
The {\it natural function} for this family is defined by
\begin{equation} \label{nat fun}
\psi[ \zeta_v](p) \stackrel{def}{=} \sup_{v\in \cal{V}} ||\zeta_v||_p,
\end{equation}
with the corresponding domain of definition $ \ {\Dom} \ \psi[\zeta_v]. \ $ \par
Evidently,
$$
\sup_{v \in \cal{V}} ||\zeta_v||_{G\psi[\zeta_v]} = 1.
$$

\vspace{4mm}

\section{Main result. Example.}

\subsection{Moment rearrangement invariant spaces. }

\vspace{4mm}

 \hspace{3mm} Let $ \ (W, || \cdot ||_W) \ $  be some rearrangement invariant (r.i.) space, equipped with the norm  $ || \cdot ||_W $, where $ \   W \ $ is a linear subset of the space of all
measurable functions $ \  f: T = \{t\}  \to \mathbb R  $  over a measurable space  $ \ (T, M, \mu) \ $.

\vspace{3mm}

\begin{definition}{\rm(see \cite{Nik1})}
{\rm
 We say that a space $ (W, || \cdot ||_W)$, equipped with the norm  $ || \cdot ||_W $, is a {\it moment rearrangement invariant space} (\emph{m.r.i.} space)
if there exist $ 1 \le a < b \le \infty $, and some {\it rearrangement invariant norm} $ \  < \cdot >, \ $ defined on the space of functions $h:(a,b)\to \mathbb R$, not necessarily finite on all the non-negative measurable functions, such that
\begin{equation} \label{mri}
\forall f \in W \ \Rightarrow \ ||f||_W \stackrel{def}{=} <h(\cdot)>, \hspace{3mm} h(p) = ||f||_p, \ \ p \in (a, b) .
\end{equation}

\vspace{3mm}

  \noindent We consider $(a,b)\stackrel{def}{=} {\supp}(W)$
(moment support, not necessarily uniquely defined).

\noindent Let $ 1 \le c < d \le \infty $ and define another m.r.i. space
 $ \ R = (R, || \cdot ||_R ) \ $
 such that
\begin{equation} \label{mri_R}
\forall f \in R \ \Rightarrow \ ||f||_R \stackrel{def}{=} <h(\cdot)>, \hspace{3mm} h(q) = ||f||_q, \ \ q \in (c, d) ,
\end{equation}
 with  $(c, d) \stackrel{def}{=} {\supp}(R) \ $. The following implication holds true:
\begin{equation} \label{implication}
 {\supp(W)} >> {\supp}(R ) \ \ \Longleftrightarrow \ \ \min(a, b) \ge \max(c, d).
\end{equation}
}
\end{definition}

\vspace{3mm}

 \begin{remark}
 {\rm
 Many examples of m.r.i. spaces with applications are provided in \cite{Nik1}.
 }
 \end{remark}

\vspace{4mm}

 \subsection{Main result.}

\vspace{4mm}

\begin{theorem}
Let $ 1 \le a < b \le \infty $,\  $ 1 \le c < d \le \infty $ and $p\in(a,b), \  q\in (c,d)$.
Suppose that the inequality
  \eqref{exact up to const} holds.  Let $ \psi, \ \nu$ be generating functions of the Grand Lebesgue spaces $G\psi(a,b) ,  \ G\nu(c,d)$, respectively .
 Then
\begin{equation} \label{main res}
\frac{||U[g]||_{G\psi}}{\phi_{G\psi} (\sigma^{-1})} \le \underline{C} \cdot \frac{||g||_{G\nu}}{\phi_{G\nu} (\sigma^{-1})},
\end{equation}
and the "constant" $ \ \underline{C} \ $ in  (\ref{main res}) is, in the general case, the best  possible. \par

\end{theorem}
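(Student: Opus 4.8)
The idea is to run the estimate pointwise in the exponent $p$ and then take the supremum defining the $G\psi$-norm, the whole mechanism being a double application of the $\inf$--$\sup$ duality between a generating function and its fundamental function \eqref{Fund fun}. Fix $g$ with $0<||g||_{G\nu}<\infty$ (if $||g||_{G\nu}=\infty$ the right-hand side of \eqref{main res} is infinite and nothing is to be proved). By the definition of the $G\nu$-norm, $|||g|||_q\le\nu(q)\,||g||_{G\nu}$ for every $q\in(c,d)$. Fixing also $p\in(a,b)$, applying \eqref{exact up to const} (with its minimal constant $\underline C$) for each admissible pair $(p,q)$, and factoring $\sigma^{1/q-1/p}=\sigma^{1/q}\,\sigma^{-1/p}$, we get
$$
||U[g]||_p\;\le\;\underline C\,\sigma^{1/q-1/p}\,|||g|||_q\;\le\;\underline C\,\sigma^{-1/p}\,||g||_{G\nu}\,\bigl(\sigma^{1/q}\,\nu(q)\bigr),\qquad q\in(c,d).
$$

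Since the left member does not depend on $q$, I take the infimum over $q$ on the right and invoke
$$
\inf_{q\in\Dom(\nu)}\bigl(\sigma^{1/q}\,\nu(q)\bigr)\;=\;\Bigl(\,\sup_{q\in\Dom(\nu)}\frac{\sigma^{-1/q}}{\nu(q)}\Bigr)^{-1}\;=\;\frac{1}{\phi_{G\nu}(\sigma^{-1})},
$$
which is \eqref{Fund fun} read at $\delta=\sigma^{-1}$. Hence $||U[g]||_p\le\underline C\,\sigma^{-1/p}\,||g||_{G\nu}/\phi_{G\nu}(\sigma^{-1})$. Dividing by $\psi(p)$, taking the supremum over $p\in\Dom(\psi)$, and recognizing $\sup_p\sigma^{-1/p}/\psi(p)=\phi_{G\psi}(\sigma^{-1})$ once more by \eqref{Fund fun}, I reach $||U[g]||_{G\psi}\le\underline C\,\phi_{G\psi}(\sigma^{-1})\,||g||_{G\nu}/\phi_{G\nu}(\sigma^{-1})$, which is exactly \eqref{main res} after transposing $\phi_{G\psi}(\sigma^{-1})$.

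For the sharpness of $\underline C$ I would specialize the two generating functions to the extremal ones $\psi=\psi_r$ and $\nu=\nu_s$, for which $G\psi=L_r(X)$, $G\nu=L_s(Y)$ and, by \eqref{Fund fun}, $\phi_{G\psi}(\sigma^{-1})=\sigma^{-1/r}$, $\phi_{G\nu}(\sigma^{-1})=\sigma^{-1/s}$. Then \eqref{main res} collapses to $||U[g]||_r\le\underline C\,\sigma^{1/s-1/r}\,|||g|||_s$, and taking the supremum over $g$ and over $r\in(a,b)$, $s\in(c,d)$ reproduces precisely the defining formula \eqref{minvalue} of $\underline C$; thus no constant smaller than $\underline C$ can serve in general. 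The positive magic-square matrices \eqref{def magic square}, for which equality \eqref{super exact} holds with $\sigma=n$, provide a concrete instance where \eqref{main res} is attained.

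I do not expect a genuine analytic obstacle: the argument is essentially bookkeeping. The points deserving care are the handling of the moment supports $(a,b)$, $(c,d)$ and of the domains of $\psi$, $\nu$ --- including the tacit requirement, needed to form and divide by the ratios, that $\phi_{G\psi}(\sigma^{-1})$ and $\phi_{G\nu}(\sigma^{-1})$ be finite and strictly positive --- and, above all, the observation that \eqref{exact up to const} is used separately for each fixed pair $(p,q)$, which is exactly what legitimizes the inner infimum over $q$ that brings in the fundamental function of $G\nu$.
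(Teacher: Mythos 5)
Your proof of the inequality is correct and is essentially the paper's own argument: bounding $|||g|||_q\le\nu(q)\,||g||_{G\nu}$ and taking the infimum over $q$ of $\sigma^{1/q}\nu(q)$ on the right is exactly dual to the paper's step of dividing by $\nu(q)$ and taking the supremum of $\sigma^{-1/q}/\nu(q)$ on the left, and the subsequent division by $\psi(p)$ and supremum over $p$ coincide. The one place you genuinely diverge is the sharpness of $\underline{C}$: the paper merely cites \cite{Ostr int oper}, whereas you specialize to the extremal generating functions $\psi_r,\ \nu_s$ (for which $G\psi_r=L_r$ and $\phi_{G\psi_r}(\sigma^{-1})=\sigma^{-1/r}$) so that \eqref{main res} collapses back to \eqref{exact up to const} and the definition \eqref{minvalue} of $\underline{C}$ is recovered on taking suprema over $g$, $r$, $s$; this is a clean, self-contained justification that the paper does not spell out, and it is valid since the paper explicitly admits these degenerate $\psi_r$ as Grand Lebesgue generating functions.
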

\begin{proof}
The relation \eqref{exact up to const} may be rewritten as follows
\begin{equation} \label{key relat}
||U[g]||_p \ \sigma^{-1/q} \le C \ |||g|||_q \ \sigma^{-1/p}, \ \  p \in (a,b), \ q \in (c,d),
\end{equation}
and, dividing by $ \ \nu(q) \ $,
$$
||U[g]||_p \cdot \sigma^{-1/q}/\nu(q)  \le C \ \left(\ |||g|||_q/\nu(q) \ \right) \ \cdot \sigma^{-1/p}, \ \ p \in (a,b), \ q \in (c,d).
$$
We take now the supremum over $ \ q: \ $

$$
||U[g]||_p \cdot \phi_{G\nu}(1/\sigma) \le C \ ||g||_{G\nu} \cdot \sigma^{-1/p}.
$$
It remains to divide by $ \ \psi(p) \ $ and take the supremum over $ \ p: \ $
$$
||U[g]||_{G\psi} \cdot \phi_{G\nu}(1/\sigma) \le C \ ||g||_{G\nu} \cdot \phi_{G\psi} (1/\sigma),
$$
which is \eqref{main res}.

The exactness of the constant $ \ \underline{C} \ $ is grounded in particular in
\cite{Ostr int oper}.
\end{proof}

\vspace{4mm}

Consider now the m.r.i. spaces $W$ and $R$ introduced before. Taking from both sides of \eqref{key relat} the norm $ \ << \cdot \ >> $ and $ \ < \cdot > \ $ respectively, and taking into account the relation
$$
<<\delta^{1/p} >> = \phi_W(\delta), \ \  \delta > 0,
$$
the following  more general  estimate holds.
 \vspace{3mm}

 \begin{theorem}

\begin{equation} \label{aux space res}
\frac{<U[g]>}{\phi_W(\sigma^{-1})} \le \underline{C} \cdot \frac{<<g>>}{\phi_R(\sigma^{-1})},
\end{equation}
with non - improvable coefficient $ \ \underline{C}$, in the general case  .
\end{theorem}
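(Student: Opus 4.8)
The plan is to repeat, almost word for word, the proof of the preceding Theorem~3.1, replacing the two weighted suprema $\sup_q[\,\cdot/\nu(q)\,]$ and $\sup_p[\,\cdot/\psi(p)\,]$ by the abstract rearrangement invariant norms $<<\cdot>>$ and $<\cdot>$ that define the m.r.i.\ spaces $R$ and $W$. As before the starting point is the rewriting \eqref{key relat} of \eqref{exact up to const},
$$
||U[g]||_p \ \sigma^{-1/q} \ \le \ \underline{C}\ |||g|||_q \ \sigma^{-1/p}, \qquad p \in (a,b),\ q \in (c,d),
$$
now with the least admissible constant $\underline{C}$ from \eqref{minvalue} already inserted. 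For each fixed pair $(p,q)$ this is an inequality between non-negative numbers, and the whole argument rests on the fact that a rearrangement invariant norm, being a function norm, is \emph{monotone} ($0\le h_1\le h_2\Rightarrow <h_1>\le<h_2>$) and positively homogeneous; this is exactly what legitimizes applying $<\cdot>$ and $<<\cdot>>$ one variable at a time.

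First I would fix $p\in(a,b)$ and view both sides of the displayed inequality as functions of $q\in(c,d)$. Since $||U[g]||_p$ and $\sigma^{-1/p}$ do not depend on $q$, applying $<<\cdot>>$ in the variable $q$ and pulling the constants out gives
$$
||U[g]||_p \ <<\sigma^{-1/q}>> \ \le \ \underline{C}\ \sigma^{-1/p}\ <<\,|||g|||_q\,>>.
$$
By the definition \eqref{mri_R} of the norm of $R$ we have $<<\,|||g|||_q\,>>=<<g>>$, and by the fundamental–function relation recalled just before the statement (the analogue for $R$ of \eqref{Fund fun}), $<<\sigma^{-1/q}>>=\phi_R(\sigma^{-1})$. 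Hence
$$
||U[g]||_p \ \le \ \frac{\underline{C}\ <<g>>}{\phi_R(\sigma^{-1})}\ \sigma^{-1/p}, \qquad p\in(a,b).
$$

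Next I would apply $<\cdot>$ in the variable $p$ to this last inequality. The factor $\underline{C}\,<<g>>/\phi_R(\sigma^{-1})$ is now a constant, so monotonicity and homogeneity yield
$$
<U[g]> \ \le \ \frac{\underline{C}\ <<g>>}{\phi_R(\sigma^{-1})}\ <\sigma^{-1/p}> \ = \ \frac{\underline{C}\ <<g>>}{\phi_R(\sigma^{-1})}\ \phi_W(\sigma^{-1}),
$$
using $<\sigma^{-1/p}>=\phi_W(\sigma^{-1})$ (here $<U[g]>$ stands, as in \eqref{mri}, for $<\,||U[g]||_p\,>$). A trivial rearrangement is precisely \eqref{aux space res}. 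That $\underline{C}$ is non-improvable in general is inherited from Theorem~3.1: the Grand Lebesgue spaces $G\psi(a,b)$, $G\nu(c,d)$ are themselves m.r.i.\ spaces whose fundamental functions are, by \eqref{Fund fun}, $\phi_{G\psi}$ and $\phi_{G\nu}$, so the extremal configuration already used there, together with the sharpness established in \cite{Ostr int oper}, realizes equality in \eqref{aux space res}.

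The two points that an honest write-up must make explicit are the ones used silently above: (i) the lattice properties (monotonicity, positive homogeneity) of a rearrangement invariant norm, which are what allow the two norming operations to be decoupled and performed in either order — the m.r.i.\ analogue of the obvious decoupling of the two suprema in the proof of Theorem~3.1; and (ii) the domain/finiteness issue, namely that $\sigma^{-1/q}\in\Dom(<<\cdot>>)$ and $\sigma^{-1/p}\in\Dom(<\cdot>)$, equivalently that $\phi_R(\sigma^{-1})$ and $\phi_W(\sigma^{-1})$ are positive and finite; if either fundamental function takes the value $+\infty$ at $\sigma^{-1}$, then \eqref{aux space res} is to be read in the extended-real sense and either holds trivially or is vacuous. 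I expect step (i) — justifying the interchange of the two norming operations — to be the main, though essentially routine, obstacle.
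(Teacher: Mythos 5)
Your proposal is correct and follows essentially the same route as the paper: rewrite \eqref{exact up to const} as \eqref{key relat}, apply the two rearrangement invariant norms $<<\cdot>>$ and $<\cdot>$ successively (using monotonicity and homogeneity and the fundamental-function identity $<\delta^{1/p}>=\phi_W(\delta)$, $<<\delta^{1/q}>>=\phi_R(\delta)$), and inherit the non-improvability of $\underline{C}$ from the Grand Lebesgue particular case of the preceding theorem. Your write-up is in fact more explicit than the paper's one-sentence argument, spelling out the lattice properties and the finiteness of $\phi_W(\sigma^{-1})$, $\phi_R(\sigma^{-1})$ that the paper leaves implicit.
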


The non-improvability is grounded in the particular case in the previous theorem.

\vspace{3mm}

\begin{example} \textbf{Magic square}.
{\rm
\vspace{3mm}

 \hspace{3mm} Let us return to the linear operator (\ref{super exact}) associated to the magic square matrix defined through \eqref{def magic square}. As long as there is therein the
 restriction $ \ q \le p, \ $ it is necessary to assume here that $ \  q \in (c,d), \ p \in (a,b), \ $ where
$$
1 \le c < d \le a < b \le \infty.
$$
Following, one can use \eqref{aux space res}  with  the exact value of  the constant $ \ \underline{C}  = \alpha$, with $\alpha$ defined in \eqref{def magic square}.
 }
 \end{example}

\vspace{6mm}

\emph{\textbf{\footnotesize Acknowledgements}.} {\footnotesize M.R. Formica is member of Gruppo
Nazionale per l'Analisi Matematica, la Probabilit\`{a} e le loro Applicazioni (GNAMPA) of the Istituto Nazionale di Alta Matematica (INdAM) and member of the UMI group \lq\lq Teoria dell'Approssimazione e Applicazioni (T.A.A.)\rq\rq and is partially supported by the INdAM-GNAMPA project, {\it Risultati di regolarit\`{a} per PDEs in spazi di funzione non-standard}, codice CUP\_E53C22001930001 and partially supported by University of Naples \lq\lq Parthenope\rq\rq, Dept. of Economic and Legal Studies, project CoRNDiS, DM MUR 737/2021, CUP I55F21003620001.
}

\vspace{6mm}

\end{document}